\documentclass[11pt,a4paper]{article}
\usepackage[T1]{fontenc}
\usepackage{lmodern}
\usepackage{amsmath,amssymb,amsthm,mathtools}
\usepackage{microtype}
\usepackage[margin=29mm]{geometry}
\usepackage{xcolor}
\usepackage[colorlinks=true,linkcolor=blue!50!black,citecolor=blue!50!black,urlcolor=blue!50!black]{hyperref}
\numberwithin{equation}{section}
\newtheorem{theorem}{Theorem}[section]
\newtheorem{lemma}[theorem]{Lemma}

\newtheorem{corollary}[theorem]{Corollary}
\newtheorem{remark}[theorem]{Remark}
\newcommand{\primed}{\substack{n,m\ge1\\(n,m)=1}}
\title{Lucas recurrences and Lambert series in coprime double sums}
\author{Nikita Kalinin}
\date{18 October 2026}
\begin{document}
\maketitle
\begin{abstract}
For the Lucas numbers $L_0=2$, $L_1=1$, and $L_n=L_{n-1}+L_{n-2}$,
we obtain
\[
\sum_{\primed}\frac{1}{L_{2n}L_{2m}L_{2(n+m)}}=\frac1{45}.
\]
This follows from the absolutely convergent identity
\[
\sum_{\primed}\frac{q^{n+m}}
{(1+q^n)(1+q^m)(1+q^{n+m})}
=\frac{q^2}{(1-q^2)^2},\qquad |q|<1.
\]
We give a self-contained proof by local cancellation on the
Calkin--Wilf tree, as a specialization of Hata's Farey-interval
telescoping principle. Specializations yield a hyperbolic-cosine
identity
\[
\sum_{\primed}
\frac{1}{\cosh(nx)\cosh(mx)\cosh((n+m)x)}
=\frac{2}{\sinh^2(2x)},\qquad x>0
\]
and weighted sums for canonical second-order Lucas
recurrences. Decomposing arbitrary positive pairs according to their
greatest common divisor gives an unrestricted sum expressed by a
Lambert series and the quasimodular Eisenstein series $E_2$.
Conversely, M\"obius inversion recovers the primitive identity from
a double Lambert-series result of Cui, Kumar, and Singh.
\end{abstract}

\section{Introduction and historical context}\label{sec:introduction}

Elementary identities for double sums can reveal unexpected relations
among special values. In his discussion of zeta values, Zagier
\cite[\S1]{Zagier1994} gives the example
\[
f(m,n)=\frac{1}{mn^3}+\frac{1}{2m^2n^2}+\frac{1}{m^3n},
\]
for which
\begin{equation}\label{eq:zagier-local}
f(m,n)-f(m+n,n)-f(m,m+n)=\frac{1}{m^2n^2}.
\end{equation}
Summation yields $\zeta(2)^2=\tfrac52\zeta(4)$, illustrating a
telescoping mechanism that also enters the study of multiple zeta
values. In this note we explore a $q$-series counterpart of this
mechanism. The same substitutions $(m,n)\mapsto(m+n,n)$ and
$(m,n)\mapsto(m,m+n)$ lead to a coprime double-sum identity whose
specializations involve hyperbolic functions and Lucas recurrences,
while removing coprimality connects it with Lambert series and
Eisenstein series.

Such elementary telescoping identities are closely related to the
theory of multiple zeta values. In particular, the double zeta values
\[
\zeta(r,s)=\sum_{m>n\ge1}\frac{1}{m^r n^s},
\qquad r\ge2,\quad s\ge1,
\]
with integer arguments satisfy two families of relations arising from
the multiplication of series (the stuffle product) and the
iterated-integral representation (the shuffle product). The resulting
double shuffle relations, together with their regularized versions,
were studied systematically by Ihara, Kaneko, and Zagier
\cite{IKZ2006}. They provide an algebraic framework for the relations
obtained from partial-fraction identities in elementary evaluations
of zeta values.

A remarkable development was the discovery by Gangl, Kaneko, and
Zagier \cite{GKZ2006} of a connection between these relations and the
theory of modular forms. They introduced a formal double zeta space,
defined by generators subject to the double shuffle relations, and
showed that its subspace generated by formal products with even
indices is canonically isomorphic to the space of modular forms of
the corresponding weight with rational Fourier coefficients. The
connection is described explicitly in terms of period polynomials,
which satisfy the Manin relations associated with the generators of
the modular group. In particular, for every even weight $k\ge4$, the
space $S_k(\mathrm{SL}_2(\mathbb Z))$ of cusp forms gives rise to
$\dim_{\mathbb C}S_k$ linearly independent relations among double
zeta values with both arguments odd. 

We hope to develop a similar theory for the multiplicative
substitutions $(x,y)\mapsto(x,xy)$ and $(x,y)\mapsto(xy,y)$
considered in the present paper. More precisely, we would like to
understand systematically the rational potentials $H(x,y)$ for
which the coboundary
\[
\Delta H(x,y)=H(x,y)-H(x,xy)-H(xy,y)
\]
admits a factorization into functions of $x$, $y$, and $xy$, and to
obtain structural classifications and dimension formulas for the
resulting families of identities. The identity proved below provides
a concrete example in this direction. Such a theory could reveal new
examples and further relations between recurrence sums and modular
or quasimodular forms. It would be particularly interesting to
discover whether an analogue of the period-polynomial theory exists
in this setting, and whether the corresponding spaces of relations
admit a similarly natural description.

{\bf Plan of the paper}. Throughout this note, pairs $(n,m)$ are
\emph{ordered} positive integer pairs, and a pair is called primitive
when $\gcd(n,m)=1$. We reserve $L_n$ for the ordinary Lucas numbers;
their even subsequence begins $L_2,L_4,L_6,L_8=3,7,18,47$.

The common source of the evaluations is the primitive $q$-identity in
Theorem~\ref{thm:q}. Its summand is the difference between a weight
at $(n,m)$ and the weights at its two children $(n,n+m)$ and
$(n+m,m)$. Summing over the resulting tree cancels all interior
weights, and an elementary estimate shows that the remaining
boundary contribution tends to zero. Section~\ref{sec:primitive}
gives this proof and its Farey-interval interpretation.

The principal antecedent is Hata's work on Farey fractions and sums
over coprime pairs \cite{Hata1995}. His Theorem~6.1 sums the
difference between an interval weight and the weights of its two
mediant children, provided the total weight at successive generations
tends to zero. We apply this principle to an explicit rational
function of $q^n$ and $q^m$. The denominator pairs follow the same
recursion as the Calkin--Wilf tree \cite{CalkinWilf2000}.
Related telescoping arguments appear in the author's proof of Hata's
formula for Euler's constant \cite{KalininHata2025} and in his work on
positive unimodular bases and Conway topographs
\cite{Kalinin2026,KalininTopographs2025}.

Section~\ref{sec:applications} translates the primitive identity
into a hyperbolic-cosine sum and weighted identities for canonical
Lucas-type recurrences. The ordinary Lucas evaluation is a direct
specialization. Section~\ref{sec:lambert} removes coprimality by
writing every positive pair as a multiple of a primitive pair. This
is the visible-lattice-point decomposition underlying work of
Campbell \cite{Campbell1994,Campbell1998}; here it produces a Lambert
series with an $E_2$ interpretation. We also explain how a double
Lambert-series identity of Cui, Kumar, and Singh
\cite{CuiKumarSingh2026}, following work of Amdeberhan, Andrews, and
Ballantine \cite{AmdeberhanAndrewsBallantine2026}, yields the primitive
identity by M\"obius inversion.

The present note establishes the telescoping evaluations and Lambert-series
connections described above. We have not identified the exact
primitive $q$-formula in the sources discussed here.

\section{A telescoping identity over primitive pairs}\label{sec:primitive}
Starting from $(1,1)$, consider the two moves
\begin{equation}\label{eq:children}
(n,m)\longmapsto(n,n+m),\qquad(n+m,m).
\end{equation}
These are the parent--child rules of the Calkin--Wilf tree
\cite{CalkinWilf2000}.
For complex $q$ with $|q|<1$ define
\begin{align}
K_q(n,m)&=\frac{q^{n+m}}{(1+q^n)(1+q^m)(1+q^{n+m})},\label{eq:kernel}\\
H_q(n,m)&=\frac{q^{n+m}}{(1-q^{2n})(1-q^{2m})}.\label{eq:potential}
\end{align}
At $q=0$ the formulas have their evident limiting meanings.

\begin{lemma}[Local cancellation]\label{lem:local}
For $n,m\ge1$ and $|q|<1$,
\begin{equation}\label{eq:local}
K_q(n,m)=H_q(n,m)-H_q(n,n+m)-H_q(n+m,m).
\end{equation}
\end{lemma}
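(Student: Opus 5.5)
The plan is to reduce \eqref{eq:local} to a polynomial identity in two variables. Put $x=q^n$ and $y=q^m$, so that $q^{n+m}=xy$. Since $|q|<1$ and $n,m\ge1$, we have $|x|,|y|<1$, so none of the factors $1-x^2$, $1-y^2$, $1-x^2y^2$, $1+x$, $1+y$, $1+xy$ vanishes. In these variables the definitions \eqref{eq:kernel} and \eqref{eq:potential} read
\[
K_q(n,m)=\frac{xy}{(1+x)(1+y)(1+xy)},\qquad
H_q(n,m)=\frac{xy}{(1-x^2)(1-y^2)}.
\]
The two children are obtained by the multiplicative substitutions $y\mapsto xy$ and $x\mapsto xy$:
\[
H_q(n,n+m)=\frac{x^2y}{(1-x^2)(1-x^2y^2)},\qquad
H_q(n+m,m)=\frac{xy^2}{(1-x^2y^2)(1-y^2)}.
\]

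Next I put the right-hand side of \eqref{eq:local} over the common denominator $D=(1-x^2)(1-y^2)(1-x^2y^2)$. The numerator becomes
\[
xy(1-x^2y^2)-x^2y(1-y^2)-xy^2(1-x^2)
= xy\bigl(1-x-y+x^2y+xy^2-x^2y^2\bigr).
\]
For the left-hand side, I multiply numerator and denominator of $K_q$ by $(1-x)(1-y)(1-xy)$. This turns its denominator into $D$ and its numerator into $xy(1-x)(1-y)(1-xy)$.

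It then remains to check the factorization
\[
(1-x)(1-y)(1-xy)=1-x-y+x^2y+xy^2-x^2y^2.
\]
This follows by expanding $(1-x-y+xy)(1-xy)$: the two $\pm xy$ terms cancel and the other terms match. Both sides of \eqref{eq:local} are then the same rational function of $x$ and $y$, which proves the lemma. At $q=0$ both sides are $0$, consistent with the limiting convention.

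There is no real obstacle here. The only point needing care is the sign bookkeeping in the numerator, and the key observation is that $1-x^2=(1-x)(1+x)$ lets the $(1+\cdot)$ factors of $K_q$ absorb the $(1-\cdot)$ factors produced by the telescoping.
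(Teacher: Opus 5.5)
Your proof is correct and is essentially the paper's argument: the paper makes the same substitution $a=q^n$, $b=q^m$, uses the same common denominator $(1-a^2)(1-b^2)(1-a^2b^2)$, and factors the numerator as $ab(1-a)(1-b)(1-ab)$. The only difference is cosmetic: you multiply $K_q$ up to the common denominator instead of cancelling down from the right-hand side, and you also explicitly check that no factor vanishes for $|q|<1$.
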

\begin{proof}
Put $a=q^n$ and $b=q^m$.  The right-hand side of
\eqref{eq:local} becomes
\[
\frac{ab}{(1-a^2)(1-b^2)}
-\frac{a^2b}{(1-a^2)(1-a^2b^2)}
-\frac{ab^2}{(1-b^2)(1-a^2b^2)}.
\]
Over the common denominator $(1-a^2)(1-b^2)(1-a^2b^2)$,
the numerator is $ab(1-a)(1-b)(1-ab)$.
Canceling these three factors yields $ab/[(1+a)(1+b)(1+ab)]$,
which is the left-hand side.
\end{proof}

\begin{theorem}[Primitive $q$-identity]\label{thm:q}
For every $q\in\mathbb C$ with $|q|<1$,
\begin{equation}\label{eq:q}
\boxed{\displaystyle
\sum_{\primed}\frac{q^{n+m}}
{(1+q^n)(1+q^m)(1+q^{n+m})}
=\frac{q^2}{(1-q^2)^2}.}
\end{equation}
The series converges absolutely.
\end{theorem}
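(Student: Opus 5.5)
Sum Lemma~\ref{lem:local} over the Calkin--Wilf tree and telescope. Every primitive ordered pair $(n,m)$ occurs exactly once in the tree generated from $(1,1)$ by the moves \eqref{eq:children}. To see this, run the Euclidean step backwards: the parent of $(n,m)\ne(1,1)$ is $(n,m-n)$ if $m>n$ and $(n-m,m)$ if $n>m$, and coprimality rules out $n=m$ except at the root. Let $T_k$ be the set of $2^k$ pairs at depth $k$. Summing \eqref{eq:local} over all pairs of depth less than $N$ telescopes, because each pair at depth $1\le j<N$ enters once with sign $+$ (as a parent) and once with sign $-$ (as a child). This gives
\[
\sum_{j<N}\sum_{(n,m)\in T_j}K_q(n,m)=H_q(1,1)-\sum_{(n,m)\in T_N}H_q(n,m),
\]
where $H_q(1,1)=q^2/(1-q^2)^2$ is the right-hand side of \eqref{eq:q}.

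Absolute convergence comes first, so that the depth ordering of the summation is legitimate. Put $r=|q|<1$. Then $|1+q^k|\ge 1-r^k\ge 1-r$, so
\[
|K_q(n,m)|\le r^{n+m}/(1-r)^3 .
\]
The number of ordered pairs with $n+m=s$ is $s-1$, so $\sum_s (s-1)r^s/(1-r)^3<\infty$. Hence the series in \eqref{eq:q} converges absolutely, and its sum equals the limit of the depth-truncated partial sums.

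The main step is the boundary estimate $\sum_{T_N}|H_q(n,m)|\to0$. As above,
\[
|H_q(n,m)|\le r^{n+m}/(1-r)^2 ,
\]
so it suffices to show $\sum_{(n,m)\in T_N} r^{n+m}\to0$. The depth-$N$ pairs are distinct, and each has $n+m\ge N+2$: every move increases $n+m$ by at least $1$, starting from $2$. Therefore
\[
\sum_{T_N}r^{n+m}\le\sum_{s\ge N+2}(s-1)r^s ,
\]
which is the tail of a convergent series and tends to $0$.

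The only subtlety is that $T_N$ has $2^N$ elements, so a crude bound of the form $2^N\max_{T_N}|H_q|$ would fail. The bound above avoids this by using distinctness together with the minimum size $n+m\ge N+2$. Letting $N\to\infty$ in the telescoped identity then gives \eqref{eq:q}. The point $q=0$ is trivial, since both sides vanish.
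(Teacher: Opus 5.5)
Your proof is correct and follows the paper's argument. You telescope the local cancellation identity over the Calkin--Wilf tree and control the boundary term with the bound $|H_q(n,m)|\le C\,r^{n+m}$. The only difference is cosmetic: you make the level-$N$ mass vanish through distinctness plus $n+m\ge N+2$ and a tail estimate, whereas the paper notes that disjoint levels of an absolutely summable family must have masses tending to zero.
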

\begin{proof}
Every coprime positive pair occurs exactly once in the tree
\eqref{eq:children}. Indeed, except for the root $(1,1)$, a coprime pair has
unequal coordinates; its unique parent is $(n,m-n)$ if $m>n$, or
$(n-m,m)$ if $n>m$.  Repeated subtraction eventually reaches $(1,1)$.

Sum the identity \eqref{eq:local} over all vertices through level $N$.
The positive term $H_q$ at each non-root vertex cancels with its negative
occurrence at its parent.  The remaining expression is
\[
H_q(1,1)-\sum_{(n,m)\text{ at level }N+1}H_q(n,m).
\]
Put $r=|q|<1$.  We have the uniform bound
\[
|H_q(n,m)|\le\frac{r^{n+m}}{(1-r^2)^2},
\]
and the right-hand side has a finite sum over all $n,m\ge1$.  Since
successive tree levels are disjoint, the absolute mass of level $N+1$
tends to zero. This proves the formula. The bound
$|K_q(n,m)|\le r^{n+m}/(1-r)^3$ proves absolute convergence.
\end{proof}

\begin{remark}[Farey-interval interpretation]
Let $I=[a/b,c/d]\subseteq[0,1]$ have reduced endpoints and $bc-ad=1$.
Its mediant children $I^-$ and $I^+$ have denominator pairs
$(b,b+d)$ and $(b+d,d)$. Starting from $[0/1,1/1]$ therefore
reproduces the tree \eqref{eq:children} on denominator pairs.
Set
\[
\nu_q(I)=H_q(b,d).
\]
Lemma~\ref{lem:local} identifies
$\nu_q(I)-\nu_q(I^-)-\nu_q(I^+)$ with $K_q(b,d)$, and the
convergence estimate in the proof of Theorem~\ref{thm:q} makes the
total boundary weight tend to zero. Hence that theorem is precisely
the specialization of Hata's telescoping theorem
\cite[Theorem~6.1]{Hata1995} to $\nu_q$, with root weight
$\nu_q([0,1])=H_q(1,1)$. For complex $q$, apply Hata's real-valued
statement separately to real and imaginary parts.
\end{remark}

\section{Hyperbolic and Lucas formulations}\label{sec:applications}
\subsection{Hyperbolic formulation}
\begin{theorem}[Primitive hyperbolic-cosine sum]\label{thm:hyperbolic}
For every $x>0$,
\begin{equation}\label{eq:hyperbolic}
\boxed{\displaystyle
\sum_{\primed}
\frac{1}{\cosh(nx)\cosh(mx)\cosh((n+m)x)}
=\frac{2}{\sinh^2(2x)}.}
\end{equation}
\end{theorem}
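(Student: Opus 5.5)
We specialize Theorem~\ref{thm:q} at the real value $q=e^{-2x}\in(0,1)$, which is allowed because $x>0$. No new convergence argument is needed: Theorem~\ref{thm:q} already gives absolute convergence on the left, and the specialization only multiplies each summand by a fixed constant.

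First rewrite the kernel. Since $1+q^n=e^{-nx}(e^{nx}+e^{-nx})=2e^{-nx}\cosh(nx)$, the three denominator factors multiply to
\[
(1+q^n)(1+q^m)(1+q^{n+m})=8\,e^{-nx}e^{-mx}e^{-(n+m)x}\cosh(nx)\cosh(mx)\cosh((n+m)x).
\]
The prefactor is $e^{-2(n+m)x}$, and the numerator is $q^{n+m}=e^{-2(n+m)x}$. These cancel, so
\[
K_q(n,m)=\frac{1}{8\cosh(nx)\cosh(mx)\cosh((n+m)x)}.
\]

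Next rewrite the right-hand side of \eqref{eq:q}. Since $1-q^2=1-e^{-4x}=2e^{-2x}\sinh(2x)$,
\[
\frac{q^2}{(1-q^2)^2}=\frac{e^{-4x}}{4e^{-4x}\sinh^2(2x)}=\frac{1}{4\sinh^2(2x)}.
\]

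Multiplying \eqref{eq:q} by $8$ gives exactly \eqref{eq:hyperbolic}. The only point needing care is the bookkeeping of the exponential prefactors, namely checking that the total exponent $-2(n+m)x$ cancels in both computations. There is no real obstacle.
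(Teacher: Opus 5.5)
Your proposal is correct and follows the same route as the paper: substitute $q=e^{-2x}$ in Theorem~\ref{thm:q}, recognize the summand as $8K_q(n,m)$, and check that $8q^2/(1-q^2)^2=2/\sinh^2(2x)$. Your explicit tracking of the factors $e^{-nx}$, $e^{-mx}$, $e^{-(n+m)x}$ and $e^{-2x}$ just writes out the computation the paper leaves to the reader.
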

\begin{proof}
Set $q=e^{-2x}$ in Theorem~\ref{thm:q}.  Since
\[
\frac1{\cosh(kx)}=\frac{2e^{-kx}}{1+e^{-2kx}},
\]
the summand in \eqref{eq:hyperbolic} is $8K_q(n,m)$.
The result follows from
\[
8\frac{q^2}{(1-q^2)^2}=\frac{2}{\sinh^2(2x)}.
\]
\end{proof}

There is also a direct hyperbolic version of Lemma~\ref{lem:local}:
\begin{align}\label{eq:hyperboliclocal}
&\frac1{\cosh a\cosh b\cosh(a+b)}\\
&\qquad=2\left[
\frac1{\sinh(2a)\sinh(2b)}
-\frac1{\sinh(2a)\sinh(2a+2b)}
-\frac1{\sinh(2b)\sinh(2a+2b)}\right].\notag
\end{align}
For $a,b>0$ this can also be checked directly using the addition formulas
for $\sinh$ and $\cosh$.  Setting $(a,b)=(nx,mx)$ shows that
\eqref{eq:hyperbolic} telescopes without passing through $q$.

\subsection{Canonical Lucas-type recurrences}
To distinguish the general recurrence from the ordinary Lucas numbers,
write $V_n=V_n(t,s)$.
Fix real parameters
\begin{equation}\label{eq:assumption}
t>0,\qquad s\ne0,\qquad \Delta=t^2+4s>0.
\end{equation}
Define
\begin{equation}\label{eq:recurrence}
V_0=2,\quad V_1=t,\quad V_n=tV_{n-1}+sV_{n-2}\quad(n\ge2).
\end{equation}
The initial conditions are essential: we are using the canonical Lucas-type
solution of the recurrence, not an arbitrary solution.  Set
\[
\alpha=\frac{t+\sqrt\Delta}{2},\qquad
\beta=\frac{t-\sqrt\Delta}{2},\qquad q=\frac\beta\alpha.
\]
Then $\alpha>0$, $|q|<1$, $\alpha\beta=-s$, and
$V_n=\alpha^n+\beta^n=\alpha^n(1+q^n)$.

\begin{theorem}[Weighted Lucas sums]\label{thm:Lucas}
Under \eqref{eq:assumption},
\begin{equation}\label{eq:Lucas}
\boxed{\displaystyle
\sum_{\primed}\frac{(-s)^{n+m}}{V_nV_mV_{n+m}}
=\frac{s^2}{t^2(t^2+4s)}.}
\end{equation}
The series converges absolutely.
\end{theorem}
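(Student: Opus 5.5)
Theorem~\ref{thm:Lucas} follows by specializing Theorem~\ref{thm:q} at $q=\beta/\alpha$, using $V_n=\alpha^n(1+q^n)$, so the main work is bookkeeping of powers of $\alpha$.

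\textbf{Setup and $|q|<1$.} Since $\Delta>0$, both $\alpha$ and $\beta$ are real. From $t>0$ we get $\alpha>0$ and $|\beta|<\alpha$, because $|t-\sqrt\Delta|<t+\sqrt\Delta$. Hence $|q|<1$, and $q$ is real and nonzero since $s\ne0$. Also $1+q^k\neq0$ for $k\ge1$, so every $V_k\ne0$ and all summands are defined. The identity $V_n=\alpha^n+\beta^n$ is the standard Binet formula: both sides satisfy the recurrence and agree at $n=0,1$.

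\textbf{Rewriting the summand.} For any pair $(n,m)$,
\[
V_nV_mV_{n+m}=\alpha^{2(n+m)}(1+q^n)(1+q^m)(1+q^{n+m}),
\]
and $(-s)^{n+m}=(\alpha\beta)^{n+m}=\alpha^{2(n+m)}q^{n+m}$. The powers of $\alpha$ cancel exactly, giving
\[
\frac{(-s)^{n+m}}{V_nV_mV_{n+m}}=K_q(n,m).
\]
So the left side of \eqref{eq:Lucas} is the left side of \eqref{eq:q}. Absolute convergence is inherited from Theorem~\ref{thm:q}.

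\textbf{Evaluating the right side.} Theorem~\ref{thm:q} gives
\[
\frac{q^2}{(1-q^2)^2}=\frac{\alpha^2\beta^2}{(\alpha^2-\beta^2)^2}.
\]
We have $\alpha^2\beta^2=s^2$. Also $\alpha^2-\beta^2=(\alpha+\beta)(\alpha-\beta)=t\sqrt\Delta$, so $(\alpha^2-\beta^2)^2=t^2\Delta=t^2(t^2+4s)$. This yields the stated value $s^2/\bigl(t^2(t^2+4s)\bigr)$.

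The only step needing care is the verification of $|q|<1$. That is exactly where the hypotheses $t>0$ and $\Delta>0$ enter: they make $\alpha,\beta$ real and force $\alpha$ to be the root of larger modulus. Once that holds, the rest is the direct algebraic substitution above.
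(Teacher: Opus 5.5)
Your proposal is correct and matches the paper's proof: you identify the summand with $K_q(n,m)$ via $V_n=\alpha^n(1+q^n)$ and $(-s)^{n+m}=\alpha^{2(n+m)}q^{n+m}$, then rewrite $q^2/(1-q^2)^2$ as $(\alpha\beta)^2/(\alpha^2-\beta^2)^2=s^2/(t^2\Delta)$. Your additional check that $|q|<1$ and every $V_k\neq0$ fills in details the paper simply asserts.
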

\begin{proof}
By the root representation,
\[
\frac{(-s)^{n+m}}{V_nV_mV_{n+m}}
=\frac{q^{n+m}}{(1+q^n)(1+q^m)(1+q^{n+m})}.
\]
Theorem~\ref{thm:q} gives $q^2/(1-q^2)^2$, which equals
\[
\frac{(\alpha\beta)^2}{(\alpha^2-\beta^2)^2}
=\frac{s^2}{(\alpha+\beta)^2(\alpha-\beta)^2}
=\frac{s^2}{t^2\Delta}.
\]
\end{proof}

\begin{corollary}[Negative second coefficient]\label{cor:minus}
Let $t>2$ and $V_0=2$, $V_1=t$,
$V_n=tV_{n-1}-V_{n-2}$. Then
\begin{equation}\label{eq:minus}
\boxed{\displaystyle
\sum_{\primed}\frac1{V_nV_mV_{n+m}}
=\frac1{t^2(t^2-4)}.}
\end{equation}
In particular, $V_n(3,-1)=L_{2n}$, and the sum is $1/45$.
\end{corollary}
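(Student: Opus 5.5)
This corollary is the special case $s=-1$ of Theorem~\ref{thm:Lucas}, so the plan is to check the hypotheses \eqref{eq:assumption} and then simplify the right-hand side of \eqref{eq:Lucas}. With $s=-1$ and $t>2$ we have $t>0$, $s\ne0$, and $\Delta=t^2-4>0$. The recurrence \eqref{eq:recurrence} becomes $V_n=tV_{n-1}-V_{n-2}$ with $V_0=2$, $V_1=t$, which is exactly the sequence in the corollary. The weight is $(-s)^{n+m}=1$, so the left-hand side of \eqref{eq:Lucas} is the unweighted sum in \eqref{eq:minus}. The right-hand side is $s^2/(t^2(t^2+4s))=1/(t^2(t^2-4))$. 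Absolute convergence is inherited from Theorem~\ref{thm:Lucas}.

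For the Lucas specialization, put $t=3$. Then $1/(t^2(t^2-4))=1/(9\cdot5)=1/45$. It remains to identify $V_n(3,-1)$ with $L_{2n}$. I would use the Binet forms. For $t=3$, $s=-1$ the roots of $x^2-3x+1$ are $\alpha=\phi^2$ and $\beta=\phi^{-2}=\bar\phi^{\,2}$, where $\phi=(1+\sqrt5)/2$ and $\bar\phi=(1-\sqrt5)/2$; note $\phi\bar\phi=-1$. Hence
\[
V_n=\alpha^n+\beta^n=\phi^{2n}+\bar\phi^{\,2n}=L_{2n}.
\]
Alternatively, one can check the initial values $L_0=2$ and $L_2=3$ and the identity $L_{2n}=3L_{2n-2}-L_{2n-4}$. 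That identity follows from $L_{k+2}=L_{k+1}+L_k$ applied twice: $L_{2n}=2L_{2n-2}+L_{2n-3}$ and $L_{2n-3}=L_{2n-2}-L_{2n-4}$.

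No step is a real obstacle. The only care needed is to confirm that $t>2$ is exactly the condition $\Delta>0$, and that the canonical initial conditions $V_0=2$, $V_1=t$ match $L_0=2$, $L_2=3$.
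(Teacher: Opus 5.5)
Your proposal is correct and follows the paper's proof, which is the specialization $s=-1$ of Theorem~\ref{thm:Lucas}, making $(-s)^{n+m}\equiv1$. Your extra checks are also correct and fill in what the paper leaves implicit: that $t>2$ is exactly the condition $\Delta>0$, and the Binet identification $V_n(3,-1)=\phi^{2n}+\phi^{-2n}=L_{2n}$, which gives the value $1/45$.
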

\begin{proof}
Take $s=-1$ in Theorem~\ref{thm:Lucas}.  The factor
$(-s)^{n+m}$ is then identically $1$.
\end{proof}

\begin{remark}[Equivalent parametrization]
For $t=2\cosh x$ with $x>0$, the recurrence in
Corollary~\ref{cor:minus} gives $V_n=2\cosh(nx)$.
Dividing \eqref{eq:hyperbolic} by $8$ then recovers
\eqref{eq:minus} independently.
\end{remark}

\begin{corollary}[The plus-sign recurrence and its even subsequence]
Write $V_n=V_n(t,1)$, so $V_0=2$, $V_1=t>0$, and
$V_n=tV_{n-1}+V_{n-2}$.
Then
\begin{align}
\sum_{\primed}\frac{(-1)^{n+m}}{V_nV_mV_{n+m}}
 &=\frac1{t^2(t^2+4)},\label{eq:alternating}\\
\sum_{\primed}\frac1{V_{2n}V_{2m}V_{2(n+m)}}
 &=\frac1{t^2(t^2+4)(t^2+2)^2}.\label{eq:even}
\end{align}
For $t=1$ we have $V_n=L_n$, so these give respectively $1/5$
and the $1/45$ identity in the abstract.
\end{corollary}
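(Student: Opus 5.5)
The plan is to reduce both identities to Theorem~\ref{thm:Lucas}: the first directly, the second through the even subsequence.

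For \eqref{eq:alternating}, take $s=1$ in Theorem~\ref{thm:Lucas}. The hypotheses \eqref{eq:assumption} hold because $t>0$, $s=1\ne0$ and $\Delta=t^2+4>0$. The weight $(-s)^{n+m}$ becomes $(-1)^{n+m}$, and the right-hand side is $1/(t^2(t^2+4))$. Absolute convergence is inherited from the theorem.

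For \eqref{eq:even}, the idea is that $W_n:=V_{2n}$ is again a canonical Lucas-type sequence with parameters $(t',s')=(t^2+2,-1)$. With $\alpha,\beta$ the roots for $(t,1)$, we have $\alpha\beta=-1$ and $V_{2n}=\alpha^{2n}+\beta^{2n}$. The numbers $\alpha^2,\beta^2$ are roots of $X^2-(\alpha^2+\beta^2)X+(\alpha\beta)^2$, and $\alpha^2+\beta^2=V_2=t^2+2$, $(\alpha\beta)^2=1$. Hence
\[
W_0=2,\qquad W_1=V_2=t^2+2,\qquad W_n=(t^2+2)W_{n-1}-W_{n-2}.
\]
So $W_n=V_n(t^2+2,-1)$, which is exactly the recurrence of Corollary~\ref{cor:minus} with parameter $t'=t^2+2>2$ (since $t>0$).

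Corollary~\ref{cor:minus} then gives the sum $1/(t'^2(t'^2-4))$. The only computation is the factorization
\[
t'^2-4=(t^2+2)^2-4=t^4+4t^2=t^2(t^2+4),
\]
which yields $1/\bigl(t^2(t^2+4)(t^2+2)^2\bigr)$, as claimed.

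Finally, for $t=1$ the recurrence $V_n=V_{n-1}+V_{n-2}$ with $V_0=2$, $V_1=1$ is the Lucas sequence, so $V_n=L_n$. The first formula then gives $1/(1\cdot5)=1/5$, and the second gives $1/(1\cdot5\cdot9)=1/45$; this matches $V_n(3,-1)=L_{2n}$ in Corollary~\ref{cor:minus}. The one step needing care is identifying the even subsequence as the canonical solution. Here $W_0=2$ must be checked explicitly, since the theorems require the canonical initial conditions, and it holds because $V_0=2$.
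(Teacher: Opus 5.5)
Your proposal is correct and follows essentially the paper's proof: the first identity is Theorem~\ref{thm:Lucas} with $s=1$, and the second comes from recognizing $V_{2n}$ as the canonical sequence $V_n(t^2+2,-1)$ via the roots $\alpha^2,\beta^2$, applying Corollary~\ref{cor:minus} with $T=t^2+2>2$, and factoring $T^2-4=t^2(t^2+4)$. Your explicit check of the initial values $W_0=2$ and $W_1=t^2+2$ is a small addition that the paper leaves implicit.
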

\begin{proof}
The first formula is Theorem~\ref{thm:Lucas} with $s=1$.
The characteristic roots $\alpha,\beta$ have product $-1$, hence the
even subsequence has characteristic roots $\alpha^2,\beta^2$ with
sum $\alpha^2+\beta^2=t^2+2$ and product $1$.
Thus $V_{2n}$ obeys the recurrence in Corollary~\ref{cor:minus}
with parameter $T=t^2+2>2$.  Its sum equals
\[
\frac1{T^2(T^2-4)}
=\frac1{t^2(t^2+4)(t^2+2)^2}.
\]
\end{proof}

For the latter specialization, the characteristic roots of $L_n$ are
$\phi=(1+\sqrt5)/2$ and $\psi=(1-\sqrt5)/2$. The even subsequence
therefore corresponds directly to $q=(\psi/\phi)^2=\phi^{-4}$ in
Theorem~\ref{thm:q}. General background on Fibonacci and Lucas
identities can be found in \cite{Vajda1989,Koshy2001}.

\section{Unrestricted sums and Lambert series}\label{sec:lambert}
\subsection{Decomposition into primitive pairs}
Write
\[
S(q)=\sum_{\primed}K_q(n,m),\qquad
F(q)=\sum_{n,m\ge1}K_q(n,m),\qquad |q|<1.
\]

\begin{theorem}[Removing the coprimality restriction]\label{thm:all}
For every complex $q$ with $|q|<1$,
\begin{equation}\label{eq:all}
\boxed{\displaystyle
F(q)=\sum_{d\ge1}\frac{q^{2d}}{(1-q^{2d})^2}
=\sum_{N\ge1}\sigma_1(N)q^{2N},\qquad
\sigma_1(N)=\sum_{d\mid N}d.}
\end{equation}
All series converge absolutely.
\end{theorem}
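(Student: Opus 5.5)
The plan is to decompose every ordered positive pair by its greatest common divisor. Each pair $(n,m)$ with $n,m\ge1$ can be written uniquely as $(dn',dm')$ with $d=\gcd(n,m)\ge1$ and $(n',m')$ primitive. The kernel has the scaling property $K_q(dn',dm')=K_{q^d}(n',m')$, because $K_q(n,m)$ depends on $q$ only through $q^n$, $q^m$ and $q^{n+m}$. Regrouping the double sum by $d$ therefore gives
\[
F(q)=\sum_{d\ge1}\sum_{\primed}K_{q^d}(n,m)=\sum_{d\ge1}S(q^d).
\]
Since $|q^d|<1$, Theorem~\ref{thm:q} evaluates each inner sum as $S(q^d)=q^{2d}/(1-q^{2d})^2$. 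This yields the first equality in \eqref{eq:all}.

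The regrouping requires absolute convergence. From the proof of Theorem~\ref{thm:q} we have $|K_q(n,m)|\le r^{n+m}/(1-r)^3$ with $r=|q|$. The right-hand side is summable over all $n,m\ge1$, with total $r^2/(1-r)^5$. So the full double series defining $F$ converges absolutely, and any rearrangement into blocks indexed by $d$ is legitimate. Applying the same bound to $|q^d|\le r$ also shows that $\sum_d |S(q^d)|$ converges.

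For the second equality, put $x=q^{2d}$ and expand $x/(1-x)^2=\sum_{k\ge1}kx^k$. This gives
\[
\sum_{d\ge1}\frac{q^{2d}}{(1-q^{2d})^2}=\sum_{d,k\ge1}k\,q^{2dk}.
\]
This double series is absolutely convergent, since $\sum_{d,k}k r^{2dk}\le\sum_{N}N\,\tau(N)r^{2N}<\infty$. Collecting the terms with $dk=N$ then gives the coefficient $\sum_{k\mid N}k=\sigma_1(N)$.

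The argument has no real obstacle once Theorem~\ref{thm:q} is available. The only points needing care are the scaling identity $K_q(dn,dm)=K_{q^d}(n,m)$ and the justification of rearrangements, and both follow from the absolute bounds above.
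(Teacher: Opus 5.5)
Your proof is correct and follows the paper's argument essentially verbatim. You use the gcd decomposition $(n,m)=(da,db)$ with the scaling identity $K_q(da,db)=K_{q^d}(a,b)$, apply Theorem~\ref{thm:q} at each $q^d$, and expand $x/(1-x)^2=\sum_{k\ge1}kx^k$ to collect $\sigma_1(N)$. Your explicit bounds (the total $r^2/(1-r)^5$ for the double series, and the domination of the Lambert expansion) just make precise the paper's one-line appeal to absolute convergence.
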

\begin{proof}
Every positive pair has a unique representation $(n,m)=(da,db)$
with $d\ge1$ and $(a,b)=1$. Since $K_q(da,db)=K_{q^d}(a,b)$,
absolute convergence and Theorem~\ref{thm:q} give
\begin{equation}\label{eq:gcd-decomposition}
F(q)=\sum_{d\ge1}S(q^d)
=\sum_{d\ge1}\frac{q^{2d}}{(1-q^{2d})^2}.
\end{equation}
Finally, expanding the denominator gives
\[
\sum_{d\ge1}\frac{q^{2d}}{(1-q^{2d})^2}
=\sum_{d,k\ge1}kq^{2dk}
=\sum_{N\ge1}\sigma_1(N)q^{2N}.
\]
These rearrangements are justified by absolute convergence for $|q|<1$.
\end{proof}

The same decomposition underlies the visible-lattice-point framework
of \cite{Campbell1994,Campbell1998}. For example, for $|u|,|v|<1$,
\begin{equation}\label{eq:visible-points}
\sum_{\substack{a,b\ge1\\(a,b)=1}}
\frac{u^av^b}{1-u^av^b}
=\sum_{n,m\ge1}u^nv^m
=\frac{uv}{(1-u)(1-v)}.
\end{equation}
Here expanding each geometric series enumerates the positive integer
multiples of a primitive pair.

\subsection{Eisenstein-series interpretation and consequences}
To distinguish a power-series argument from a modular parameter, define
\begin{equation}\label{eq:E2}
\mathcal E_2(z)=1-24\sum_{N\ge1}\sigma_1(N)z^N,\qquad |z|<1.
\end{equation}
The usual quasimodular Eisenstein series of weight two is
$E_2(\tau)=\mathcal E_2(e^{2\pi i\tau})$, with
$\operatorname{Im}\tau>0$; see \cite{Zagier2008}.
Theorem~\ref{thm:all} becomes
\begin{equation}\label{eq:all-E2}
F(q)=\frac{1-\mathcal E_2(q^2)}{24}.
\end{equation}
In particular, with the convention $q=e^{2\pi i\tau}$ this is
$F(q)=(1-E_2(2\tau))/24$.

\begin{corollary}[Unrestricted Lucas and hyperbolic sums]
Under \eqref{eq:assumption}, with $V_n=V_n(t,s)$ and
$q=\beta/\alpha$ as in Section~\ref{sec:applications},
\begin{equation}\label{eq:Lucasall}
\sum_{n,m\ge1}\frac{(-s)^{n+m}}{V_nV_mV_{n+m}}
=\frac{1-\mathcal E_2(q^2)}{24}.
\end{equation}
For every $x>0$,
\begin{equation}\label{eq:hyperbolicall}
\sum_{n,m\ge1}\frac1{\cosh(nx)\cosh(mx)\cosh((n+m)x)}
=2\sum_{d\ge1}\frac1{\sinh^2(2dx)}.
\end{equation}
Both sums converge absolutely.
\end{corollary}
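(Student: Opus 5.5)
The plan is to deduce both identities from Theorem~\ref{thm:all} by the same substitutions already used in Section~\ref{sec:applications}, now applied term by term to the unrestricted sum $F(q)$.

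\textbf{The Lucas identity \eqref{eq:Lucasall}.} Under \eqref{eq:assumption} we have $\alpha>0$, $|q|<1$ and $V_n=\alpha^n(1+q^n)$, with $q=\beta/\alpha$. The proof of Theorem~\ref{thm:Lucas} then gives the pointwise identity
\[
\frac{(-s)^{n+m}}{V_nV_mV_{n+m}}=K_q(n,m).
\]
This identity does not use coprimality, so it holds for all $n,m\ge1$. Summing over all positive pairs gives $F(q)$. By \eqref{eq:all-E2}, this equals $(1-\mathcal E_2(q^2))/24$. Absolute convergence follows from the bound $|K_q(n,m)|\le r^{n+m}/(1-r)^3$ with $r=|q|$, which was established in the proof of Theorem~\ref{thm:q}.

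\textbf{The hyperbolic identity \eqref{eq:hyperbolicall}.} Set $q=e^{-2x}\in(0,1)$. As in the proof of Theorem~\ref{thm:hyperbolic}, each summand equals $8K_q(n,m)$, so the left side is $8F(q)$. By the first expression in \eqref{eq:all},
\[
8F(q)=\sum_{d\ge1}\frac{8q^{2d}}{(1-q^{2d})^2}.
\]
Each term is handled by the identity $8Q^2/(1-Q^2)^2=2/\sinh^2(2y)$ for $Q=e^{-2y}$, which was verified in the proof of Theorem~\ref{thm:hyperbolic}. Apply it with $y=dx$, so that $Q=q^d$ and $Q^2=q^{2d}$. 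Indeed $(1-e^{-4y})^2/e^{-4y}=(e^{2y}-e^{-2y})^2=4\sinh^2(2y)$. This gives $2\sum_{d\ge1}\sinh^{-2}(2dx)$. The summands are positive, and $F(q)$ is finite by Theorem~\ref{thm:all}, so absolute convergence is automatic.

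No step is a real obstacle. The only point requiring care is the bookkeeping of exponents: the term $q^{2d}$ in Theorem~\ref{thm:all} must be matched with $Q^2$ for $Q=q^d$, so that the argument becomes $2dx$ rather than $dx$.
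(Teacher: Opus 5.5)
Your proposal is correct and follows the paper's proof. You identify the Lucas summand with $K_q(n,m)$ via the root representation and apply \eqref{eq:all-E2}; for the hyperbolic sum you set $q=e^{-2x}$, multiply \eqref{eq:all} by $8$, and rewrite each term $8q^{2d}/(1-q^{2d})^2$ as $2/\sinh^2(2dx)$, spelling out the exponent bookkeeping and absolute convergence that the paper leaves implicit.
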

\begin{proof}
The root representation used in Theorem~\ref{thm:Lucas} identifies the
summand in \eqref{eq:Lucasall} with $K_q(n,m)$, so
\eqref{eq:all-E2} applies. For \eqref{eq:hyperbolicall}, set
$q=e^{-2x}$ and multiply \eqref{eq:all} by $8$, as in the proof
of Theorem~\ref{thm:hyperbolic}.
\end{proof}

\subsection{The double Lambert-series connection}
Cui, Kumar, and Singh \cite[Theorem~2]{CuiKumarSingh2026}, in their
work on a conjecture of Amdeberhan, Andrews, and Ballantine
\cite{AmdeberhanAndrewsBallantine2026}, prove
\begin{equation}\label{eq:cks-lambert}
\sum_{r,k\ge1}\frac{q^{2rk}}{(1+q^k)(1-q^{2r-1})}
=\sum_{h\ge1}\frac{q^{2h}}{(1-q^{2h})^2},\qquad |q|<1.
\end{equation}
We now show how their result also implies Theorem~\ref{thm:q}.
The elementary partial-fraction identity
\[
\frac{uv}{(1+u)(1+v)(1+uv)}
=\frac{v}{1-v^2}
\left(\frac1{1+uv}-\frac1{1+u}\right)
\]
gives
\begin{align}
F(q)
&=\sum_{m\ge1}\frac{q^m}{1-q^{2m}}
\sum_{n\ge1}\left(\frac1{1+q^{n+m}}-\frac1{1+q^n}\right)
\notag\\
&=\sum_{m\ge1}\frac{q^m}{1-q^{2m}}
\sum_{k=1}^{m}\frac{q^k}{1+q^k}.
\label{eq:cks-triangular}
\end{align}
For the second equality, take finite partial sums in $n$ and use
$(1+q^n)^{-1}\to1$. The last expression is exactly the triangular
sum in \cite[equation~(21)]{CuiKumarSingh2026}, where it equals the
left-hand side of \eqref{eq:cks-lambert}. Thus their theorem gives
\eqref{eq:all} without using the primitive evaluation.

To recover that evaluation, put $R(q)=q^2/(1-q^2)^2$.
The gcd decomposition gives $F(q)=\sum_{d\ge1}S(q^d)$, whereas
\eqref{eq:cks-lambert} and \eqref{eq:cks-triangular} give
$F(q)=\sum_{d\ge1}R(q^d)$. M\"obius inversion therefore yields
\[
S(q)=\sum_{d\ge1}\mu(d)F(q^d)
=\sum_{k\ge1}R(q^k)\sum_{d\mid k}\mu(d)=R(q),
\]
where $\mu$ is the M\"obius function. These inversions and
rearrangements converge absolutely: both $S(z)$ and $R(z)$ are
$O(|z|^2)$ near zero, and
$\sum_{d,h\ge1}|q|^{2dh}<\infty$. This establishes the claimed
connection with the primitive identity.

\begin{remark}[A totient Lambert series]
The rational value of the primitive sum also has the representation
\[
R(q)=\sum_{n\ge1}\frac{\varphi(n)q^{2n}}{1-q^{2n}},
\]
where $\varphi$ is Euler's totient function. Expanding the denominators
and using $\sum_{d\mid N}\varphi(d)=N$ proves the equality.
For further Lambert-series identities, see \cite{Schmidt2026}.
\end{remark}

\end{document}